\documentclass{amsart}
\usepackage{amssymb,graphics}

\newcommand{\C}{\mathbb{C}}
\newcommand{\D}{\mathbb{D}}

\newcommand{\N}{\mathbb{N}}

\newcommand{\dom}{\mbox{\rm dom}}

\newcommand{\Int}{\mbox{\rm Int}}
\newcommand{\Ext}{\mbox{\rm Ext}}

\newtheorem{theorem}{Theorem}[section]
\newtheorem{lemma}[theorem]{Lemma}

\theoremstyle{definition}

\theoremstyle{theorem}

\theoremstyle{theorem}
\newtheorem{proposition}[theorem]{Proposition}

\theoremstyle{theorem}

\theoremstyle{theorem}

\theoremstyle{definition}

\theoremstyle{theorem}

\numberwithin{equation}{section}

\begin{document}

\title{Computing links and accessing arcs}

\author{Timothy H. McNicholl}
\address{Department of Mathematics\\
              Iowa State University\\
              Ames, Iowa 50011 USA}
\email{mcnichol@iastate.edu}

\begin{abstract}
Sufficient conditions are given for the computation of an arc that accesses a point on the boundary of an open subset of the plane from a point within the set.  The existence of a not-computably-accessible but computable point on a computably compact arc is also demonstrated.
\end{abstract}
\keywords{computable topology, effective local connectivity}
\subjclass[2010]{03F60, 30C20,  30C30, 30C85}

\maketitle                  

\section{Introduction}

Let $\C$ denote the complex plane.  We consider the following situation: we are given an arc $A \subseteq \C$, a point $\zeta_1$ on $A$, and a point $\zeta_0$ that does not lie on $A$.  By the term \emph{arc} we mean a continuous embedding of $[0,1]$ into $\C$.  Such an embedding will then be referred to as a \emph{parameterization} of the arc.  We suppose that we wish to compute a parameterization of an arc $B$ from $\zeta_0$ to $\zeta_1$ that contains no point of $A$ other than $\zeta_1$.  However, we also assume $B$ must be confined to some open set.  The gist of our results is that covering information about $A$ \emph{i.e.} the ability to plot $A$ on a computer screen with arbitrarily good resolution) is not sufficient for the computation of such an arc $B$, but that covering information combined with local connectivity information is. 

Such an arc $B$ is called an \emph{accessing arc}.  More generally, when $\zeta_0$ and $\zeta_1$ are points in the plane, and when $X$ is a subset of the plane, we say that an arc $A$  from $\zeta_0$ to $\zeta_1$ \emph{links $\zeta_0$ to $\zeta_1$ via $X$} if all of its intermediate points belong to $X$.  If $\zeta_0$ is a point in an open set $U \subseteq \C$ and if $\zeta_1$ is a point on the boundary of $U$, then we say that an arc $A$ \emph{accesses $\zeta_1$ from $\zeta_0$ via $U$} if it links $\zeta_0$ to $\zeta_1$ via $U$.   

Our examination of accessing arcs is motivated in part by their relevance to boundary extensions of conformal maps as in \cite{Golusin.1969}, \cite{Nehari.1952}, and \cite{McNicholl.2012}, and to the narrow escape problem in the theory of Brownian motion.  The computation of links between points on the boundary of a domain is the first step in domain decomposition methods such as the Schwarz alternating method \cite{Garnett.Marshall.2005}, \cite{Courant.Hilbert.1989.2}.  In addition to these connections, the problem of computing accessing arcs seems to be an intrinsically interesting problem that admits many intriguing variations such as higher-dimensional versions, computable metric spaces, and rectifiable or computably rectifiable accessing arcs.

Our investigations first lead us to consider the situation in Figure \ref{fig:SITUATION} in which we have an open disk $D$, an arc $A$, a point $\zeta_1$ in $D \cap A$, and a point $\zeta_0$ in $D - A$.  From our computability questions a purely topological question naturally arises.  Namely, how close does $\zeta_1$ have to be to $\zeta_0$ in order for there to be an arc that accesses $\zeta_1$ from $\zeta_0$ via $D - A$?  
An answer is given in Theorem \ref{thm:BOUNDARY.CONNECTED}.   Moreover, the bound in this theorem can be computed from sufficient information about $D$, $\zeta_0$, $\zeta_1$, and $A$.  We then show that when such an accessing arc exists, one of its parameterizations can be computed from sufficient information about $D$, $\zeta_0$, $\zeta_1$, and $A$.  
In particular, local connectivity information about $A$ is used.  

Effective versions of local connectivity are considered in \cite{Brattka.2005}, \cite{Couch.Daniel.McNicholl.2012} and \cite{Daniel.McNicholl.2012}.  In \cite{Brattka.2005}, local connectivity information arises naturally in the consideration of the computational relationships between a function and its graph.  
In \cite{Couch.Daniel.McNicholl.2012}, it is used in the computation of space-filling curves, and in \cite{McNicholl.2012} it is used in the computation of boundary extensions of Riemann maps.  

In Theorem \ref{thm:NO.ACCESS}, we show that mere covering information about the arc $A$ is insufficient for the computation of accessing arcs. 
\begin{figure}[!h]
\resizebox{4in}{4.5in}{\includegraphics{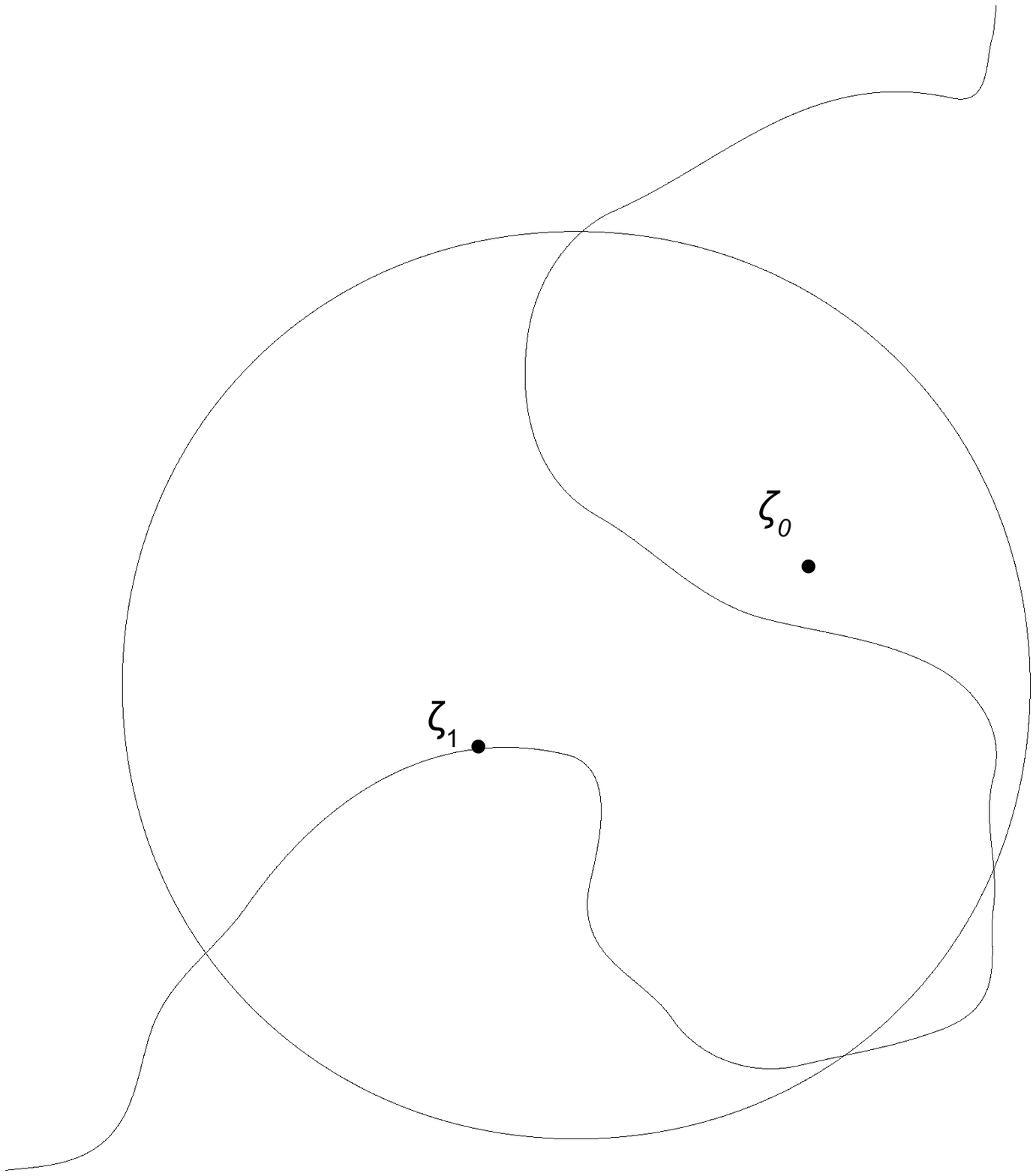}}
\caption{}\label{fig:SITUATION}
\end{figure}

The paper is organized as follows.  Section \ref{sec:BACK.TOP} covers background and preliminaries from topology.  Section \ref{sec:BACK.COMP.A} summarizes the prerequisites from computable analysis.  Section \ref{sec:NO.ACCESS} consists of the proof of Theorem \ref{thm:NO.ACCESS}.  Section \ref{sec:COMPUTING.LINKS} presents the positive results on computing links.
 

\section{Background from topology}\label{sec:BACK.TOP}

When $X, Y \subseteq \C$, let 
\[
d(X,Y) = \inf\{ |z - w|\ :\ z \in X\ \wedge\ w \in Y\}.
\]
Let $d(p, X) = d(\{p\}, X)$ when $p \in \C$ and $X \subseteq \C$.

When $f, g : [0,1] \rightarrow \C$ are bounded, let 
\[
\parallel f - g\parallel_\infty = \sup\{|f(t) - g(t)|\ :\ t \in [0,1]\}.
\]
$\parallel\ \parallel_\infty$ is called the \emph{sup norm}.

Let $f : \subseteq A \rightarrow B$ denote that $f$ is a function whose domain is contained in $A$ and whose range is contained in $B$.

When $f : \subseteq \C \rightarrow \C$, a \emph{modulus of continuity} for $f$ is a 
function $m : \N \rightarrow \N$ such that $|f(z) - f(w)| < 2^{-k}$ whenever $|z - w| \leq 2^{-m(k)}$ and $z,w \in \dom(f)$.  If a function has a modulus of continuity, then it follows that it has an increasing modulus of continuity.  A function has a modulus of continuity if and only if it is uniformly continuous.

Let $D_r(z_0)$ denote the open disk whose radius is $r$ and whose center is $z_0$.
Let $\D = D_1(0)$.

A \emph{curve} is a set $C \subseteq \C$ for which there is a continuous function $f : [0,1] \rightarrow \C$ whose range is $C$.  The function $f$ is called a \emph{parameterization} of the curve $C$.  
The term \emph{parametrization} thus has two different though related uses.  With respect to curves, it refers to a continuous surjection.  But, with respect to arcs it always refers to a continuous bijection.  We will follow the usual custom of identifying a curve and its parameterizations except when computability issues are of concern in which case the distinction is necessary by the results in \cite{Miller.2002}. 

With respect to a particular parameterization $f$ of a curve $C$, if $p = f(0)$ and $q = f(1)$, then the curve $C$ is said to be a curve \emph{from $p$ to $q$}.  

A \emph{cut point} of a set $X \subseteq \C$ is a point $p \in X$ with the property that 
$X - \{p\}$ is disconnected.  The following useful characterization of arcs is an immediate consequence of Theorem 2-27 of \cite{Hocking.Young.1988}.

\begin{proposition}\label{prop:ARC}
A set $A \subseteq \C$ is an arc if and only if it is compact, connected, and has just two non-cut points.  
\end{proposition}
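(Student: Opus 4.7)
The plan is to derive this directly from the cited Hocking--Young Theorem 2-27, which characterizes arcs among continua: a metric continuum (compact connected Hausdorff space) is homeomorphic to $[0,1]$ if and only if it has exactly two non-cut points. Since $A \subseteq \C$ is automatically Hausdorff and metrizable, the hypothesis ``compact and connected'' makes $A$ a metric continuum, so the Hocking--Young characterization applies verbatim.

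For the forward direction, I would start by fixing a parameterization $f : [0,1] \to A$, which by definition is a continuous bijection. Compactness of $[0,1]$ forces $f$ to be a homeomorphism, so $A$ is compact and connected as a homeomorphic image of $[0,1]$. The non-cut point analysis is then transported through $f$: a point $p = f(t)$ is a cut point of $A$ if and only if $t$ is a cut point of $[0,1]$, because $A \setminus \{p\}$ is homeomorphic to $[0,1] \setminus \{t\}$. Since the only non-cut points of $[0,1]$ are $0$ and $1$, the arc $A$ has exactly two non-cut points, namely $f(0)$ and $f(1)$.

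For the converse, suppose $A \subseteq \C$ is compact, connected, and has exactly two non-cut points. Then $A$, being a compact connected subset of the Hausdorff (indeed metric) space $\C$, is a metric continuum. Applying Theorem 2-27 of \cite{Hocking.Young.1988}, we get a homeomorphism $h : [0,1] \to A$. This $h$ is by definition a continuous embedding of $[0,1]$ into $\C$, hence a parameterization witnessing that $A$ is an arc.

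The only subtlety worth flagging is the elementary fact that a continuous bijection from a compact space to a Hausdorff space is a homeomorphism; this is what ensures that the ``arc'' notion used in the paper (continuous embedding of $[0,1]$) matches the notion (homeomorph of $[0,1]$) underlying Hocking--Young's statement. Nothing in this proof is an obstacle; the real content sits inside the cited theorem.
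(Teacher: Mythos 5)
Your proposal is correct and follows the same route as the paper, which simply cites Theorem 2-27 of Hocking--Young and calls the proposition an immediate consequence; you have just written out the routine details (the compact-to-Hausdorff bijection argument and the transport of cut points through the homeomorphism) that the paper leaves implicit.
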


It follows that if $f$ is a parameterization of an arc $A$, then $f(0)$ and $f(1)$ are the non-cut points of $A$.

Let $f : [0,1] \rightarrow \C$ be a curve for which there exist numbers
\[
0 = t_0 < t_1 < \ldots < t_k = 1
\]
and points $v_0, v_1, \ldots, v_k \in \C$ such that 
\begin{eqnarray}
f(x) & = & \frac{x - t_j}{t_{j+1} - t_j}(v_{j+1} - v_j) + v_j\label{eqn:RPC}
\end{eqnarray}
whenever $x \in [t_j, t_{j+1}]$.  $f$ is called a \emph{polygonal curve}.  The points $v_0, \ldots, v_k$ are called the \emph{vertexes} of $f$.  We will call the points $v_1, \ldots, v_{k-1}$ the \emph{intermediate vertexes} of $f$.  A \emph{rational polygonal curve} is a polygonal curve whose vertexes are all rational.  We note that we may take $t_j$ to be $\frac{j}{k}$ in Equation \ref{eqn:RPC}.  

The proof of the following is an easy modification of the proof of Theorem 3.5 of \cite{Hocking.Young.1988}.

\begin{lemma}\label{lm:POLY.ARC}
Suppose $U$ is a domain, and that $p,q$ are distinct points of $U$.  Then, there is a polygonal arc $P$ from $p$ to $q$ that is contained in $U$ and whose intermediate vertexes are rational.   Furthermore, if $\epsilon > 0$, then $P$ can be chosen so that the length of each of its line segments is smaller than $\epsilon$.
\end{lemma}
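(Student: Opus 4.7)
My plan is to build the polygonal arc in three stages: first produce some polygonal path in $U$ from $p$ to $q$, then trim it into a polygonal arc, then approximate each of its segments by a fine zig--zag through rational points, and finally check that the approximation is still an arc.

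\emph{Stage 1: polygonal connectivity.} I would run the usual open--closed argument inside $U$. Fix $p$ and let $V$ be the set of points of $U$ that can be joined to $p$ by a polygonal path contained in $U$. For each $z\in U$ choose a disk $D_r(z)\subseteq U$; any point of the disk can be joined to $z$ by a single segment in the disk, so $V$ and $U\setminus V$ are both open. Connectedness of $U$ together with $p\in V$ forces $V=U$, so there is a polygonal path $\gamma_0$ in $U$ from $p$ to $q$. This is essentially the content of Theorem~3.5 of \cite{Hocking.Young.1988}.

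\emph{Stage 2: reduction to an arc.} I would cut out loops from $\gamma_0$: whenever two (possibly equal) segments of the current path meet at a common point $w$, excise the portion between the first and last occurrence of $w$ and promote $w$ to a vertex. The number of segment pairs is finite, so this terminates in finitely many steps and yields an injective polygonal path $\gamma_1:[0,1]\to U$ from $p$ to $q$. Its interior vertices need not be rational, and its segments need not be short.

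\emph{Stage 3: rational, short-segment approximation.} Because $\gamma_1([0,1])$ is compact in the open set $U$, there is $\delta_0>0$ such that the closed $\delta_0$-neighborhood of $\gamma_1$ is contained in $U$. Because $\gamma_1$ is injective and has finitely many segments, any two non-adjacent segments are at some positive distance $\delta_1$ apart. Set $\delta=\tfrac{1}{3}\min(\delta_0,\delta_1,\epsilon)$. On each segment of $\gamma_1$ insert subdivision points spaced less than $\delta$ apart, and then replace each such subdivision point and each non-rational interior vertex of $\gamma_1$ (but not $p$ or $q$) by a rational point within distance $\delta$; this is possible by density of $\Q[i]$ in $\C$. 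Concatenating the resulting short segments gives a polygonal path $P$ from $p$ to $q$ whose intermediate vertices are rational and whose segments each have length less than $3\delta\leq\epsilon$, and each edge of $P$ lies in the $\delta$-tube around a segment of $\gamma_1$, so $P\subseteq U$.

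\emph{Main obstacle.} The real work is showing that $P$ is again an arc rather than a path with new self-intersections introduced by the perturbation. This is controlled by the choice of $\delta$: for two non-adjacent segments of $\gamma_1$, their $\delta$-tubes are disjoint because $\delta\leq\delta_1/3$, so the portions of $P$ approximating them stay disjoint. For adjacent segments of $\gamma_1$ the only possible interaction is near their common vertex, and since that vertex has been replaced by a single rational point used as the common endpoint of the two approximating zig--zags (and $\delta$ can be shrunk to be small compared to the lengths of all segments of $\gamma_1$), the two zig--zags meet only at that shared rational point. Hence $P$ is injective and is the desired polygonal arc.
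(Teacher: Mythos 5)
Your three-stage route --- polygonal connectivity by the open--closed argument, loop erasure, then a rational short-segment perturbation --- is exactly the ``easy modification of the proof of Theorem 3.5 of Hocking--Young'' that the paper invokes without giving details, and Stages 1 and 2 are fine. The gap is in the final step of Stage 3, precisely where you yourself locate the main obstacle: the injectivity of $P$ near a common vertex of two \emph{adjacent} segments of $\gamma_1$ is not secured by your choice of $\delta$. Taking $\delta$ ``small compared to the lengths of all segments of $\gamma_1$'' is the wrong quantity. If $S^-=[u,v]$ and $S^+=[v,w]$ meet at a small angle $\theta_v$, points of $S^-$ and $S^+$ at distance $c$ from $v$ are only about $2c\sin(\theta_v/2)$ apart, which no bound on $\delta$ in terms of segment lengths controls. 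Worse, every edge of $P$ incident to the perturbed vertex $v'$ lies in $\overline{D_{2\delta}(v)}$, and inside that disk the $\delta$-tubes around $S^-$ and $S^+$ overlap, so the tube-disjointness argument that handles non-adjacent segments says nothing here: the last edge of the zig-zag approximating $S^-$ can perfectly well cross the second edge of the zig-zag approximating $S^+$.

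The repair is routine but must be said. Either (i) bring the angles into the constant: since $\gamma_1$ is injective with finitely many segments, $\alpha=\min_v \sin(\theta_v/2)>0$ over its interior vertices; choose the subdivision and a $\delta$ small relative to $\alpha$ so that inside a fixed small disk about each original vertex $v$ the path $P$ consists of exactly the two (non-collinear) edges meeting at $v'$, and outside those disks the separation $2c\sin(\theta_v/2)-2\delta>0$ keeps the two zig-zags apart. Or (ii) drop the injectivity claim for $P$ and re-run your Stage 2 loop erasure on $P$ itself, observing that each excision only shortens edges and that its only new vertex is the intersection point of two edges of $P$, which is rational whenever both edges have rational endpoints; one then needs to confine the single edges of $P$ incident to the possibly irrational endpoints $p$ and $q$ to small disks that the rest of $P$ avoids, so that these edges are never involved in an excision and no irrational intermediate vertex is created.
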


A \emph{Jordan curve} is a curve that has a parameterization $f$ that is injective except that $f(0) = f(1)$.  When $J$ is a Jordan curve, let $\Int(J)$ denote its interior, and let $\Ext(J)$ denote its exterior. 

The proof of the following is an easy exercise, but it is useful enough to warrant stating it as a proposition.

\begin{proposition}\label{prop:CONNECTED.ACCUMULATION}
If $C \subseteq \C$ is connected, and if $X \subseteq \overline{C}$, then 
$C \cup X$ is connected.
\end{proposition}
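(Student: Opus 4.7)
The plan is to argue by contradiction using the characterization of disconnectedness in terms of separated sets: a set $Y \subseteq \C$ is disconnected if and only if $Y = A \cup B$ for some nonempty $A, B$ with $\overline{A} \cap B = A \cap \overline{B} = \emptyset$. Working with this formulation, rather than with open sets in the subspace topology of $C \cup X$, keeps the closures in the ambient space $\C$, which matches the way the hypothesis $X \subseteq \overline{C}$ is stated.

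First I would assume $C \cup X$ is disconnected and fix such a separation $C \cup X = A \cup B$. Next, because $C$ is connected and contained in the union of two separated sets, the standard fact that a connected set cannot be properly split by a separation forces $C \subseteq A$ or $C \subseteq B$; without loss of generality $C \subseteq A$. Taking closures in $\C$ gives $\overline{C} \subseteq \overline{A}$, and hence $X \subseteq \overline{C} \subseteq \overline{A}$.

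To derive the contradiction, I would pick any $b \in B$. Since $A$ and $B$ are disjoint (being separated), $b \notin A$ and therefore $b \notin C$, so $b \in (C \cup X) \setminus C \subseteq X \subseteq \overline{A}$. Thus $b \in B \cap \overline{A}$, contradicting $\overline{A} \cap B = \emptyset$. I do not expect any real obstacle here; the only point requiring attention is the initial choice of the separated-sets formulation, after which every step follows directly from the hypothesis $X \subseteq \overline{C}$ and the connectedness of $C$.
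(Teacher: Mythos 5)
Your proof is correct and complete: the separated-sets formulation (with closures taken in the ambient space $\C$) is the right one to use here, the reduction to $C \subseteq A$ via connectedness of $C$ is the standard fact, and the final step $b \in B \cap \overline{A}$ does contradict the separation. The paper itself gives no proof --- it dismisses the proposition as ``an easy exercise'' --- so there is nothing to compare against, but your argument is exactly the expected one.
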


\section{Preliminaries from computable analysis}\label{sec:BACK.COMP.A}

Our work is based on the Type Two Effectivity foundation for computable 
analysis which is described in great detail in \cite{Weihrauch.2000}.  
We give an informal summary here of the points pertinent to this paper.
We begin with the naming systems we shall use.  Intuitively, a name of an object 
is a list of approximations to that object that is sufficient to completely identify it.

A \emph{name} of a point $z \in \C$ is a list of all the rational rectangles that contain $z$.

A \emph{name} of a continuous function $f : [0,1] \rightarrow \C$ is a list of rational polygonal curves $P_0, P_1, \ldots$ such that $\parallel P_t - P_s\parallel_\infty \leq 2^{-t}$ whenever $s \geq t$ and $f = \lim_{t \rightarrow \infty} P_t$.  Here, the limit is taken with respect to the supremum norm.   Such a sequence of curves is called a \emph{strongly Cauchy sequence}.  

A \emph{plot} of a compact set $X \subseteq \C$ is a finite set of rational rectangles that each contain a point of $X$ and whose union contains $X$.   
A \emph{name} of a compact $K \subseteq \C$ is a list of all plots of $K$.  These names are called $\kappa_{mc}$-names in 
\cite{Weihrauch.2000}.  They provide precisely the right amount of information necessary to plot the set on a computer screen at any desired resolution.

However, whenever we speak of a name of an arc $A$, we always mean a name of a parameterization of $A$.  And, whenever we speak of a name of a Jordan curve $\gamma$, we always mean a name of a parameterization of $\gamma$, $f$, with the property that $f(s) = f(t)$ only when $s = t$ or $s,t \in \{0,1\}$.   

Once we establish a naming system for a space, an object of that space is called \emph{computable} if it has a computable name.

A sentence of the form 
\begin{quotation}
``From a name of a $p_1 \in S_1$, a name of a $p_2 \in S_2$, $\ldots$, and a name of a $p_k \in S_k$, it is possible to uniformly compute a name of a $p_{k+1} \in S_{k+1}$ such that 
$R(p_1, \ldots, p_k, p_{k+1})$."
\end{quotation}
is shorthand for the following: there is a Turing machine $M$ with $k$ input tapes and a one-way output tape with the property that whenever a name of a $p_j \in S_j$ is written on the $j$-th input tape for each $j \in \{1, \ldots, k\}$ and $M$ is allowed to run indefinitely, a name of a $p_{k+1} \in S_{k+1}$ such that $R(p_1, \ldots, p_{k+1})$ holds is written on the output tape.

A \emph{CIK} (``connected \emph{im kleinen}") function for a set $X \subseteq \C$ is a 
function $f: \N \rightarrow \N$ with the property that whenever $k \in \N$ and $z_0 \in X$, there is a connected set $C \subseteq D_{2^{-k}}(z_0) \cap X$ that contains $D_{2^{-f(k)}}(z_0)  \cap X$.  Related notions are considered in \cite{Daniel.McNicholl.2012}, \cite{Brattka.2008}, \cite{Miller.2002},  and \cite{Couch.Daniel.McNicholl.2012}.

A \emph{ULAC} (``uniformly local arcwise connectivity") function for a set $X \subseteq \C$ is a function $f : \N \rightarrow \N$ with the property that whenever $k \in \N$ and $z_0,z_1$ are distinct points of $X$ such that $|z_0 - z_1| \leq 2^{-f(k)}$, there is an arc $A \subseteq X$ from $z_0$ to $z_1$ whose diameter is smaller than $2^{-k}$.

We will need the following two theorems which follow from the results in \cite{Daniel.McNicholl.2012}.

\begin{theorem}\label{thm:COMPUTE.ARC}
From a name of a compact and connected $C \subseteq \C$, a CIK function for $C$, and names of distinct $\zeta_0, \zeta_1 \in C$, it is possible to compute a name of an arc $A \subseteq C$ from $\zeta_0$ to $\zeta_1$.
\end{theorem}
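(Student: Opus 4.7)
The plan is to compute a strongly Cauchy sequence $(P_k)_{k \in \N}$ of rational polygonal curves whose limit parameterizes an arc $A \subseteq C$ from $\zeta_0$ to $\zeta_1$. Each $P_k$ will be built by linearly interpolating between partition points $\zeta_0 = z_0^k, z_1^k, \ldots, z_{n_k}^k = \zeta_1$ lying on (or very near) $C$, with successive stages refining one another.

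First, I would convert the CIK function for $C$ into an effective \emph{small-connection} modulus $g : \N \rightarrow \N$ with the property that any two points of $C$ within distance $2^{-g(k)}$ lie together in a common connected subset of $C$ of diameter less than $2^{-k-2}$; this is immediate from the definition of a CIK function applied at one of the two points. Then, at stage $k$, I would use a plot of $C$ at resolution $2^{-g(k)-2}$ to compute a $2^{-g(k)}$-chain $\zeta_0 = z_0^k, \ldots, z_{n_k}^k = \zeta_1$ by searching, in the adjacency graph on the rectangles of the plot, for a path from a rectangle containing $\zeta_0$ to one containing $\zeta_1$; such a path exists because $C$ is connected.

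To enforce consistency across stages, the chain at stage $k+1$ would be obtained by inserting new partition points between each $z_i^k$ and $z_{i+1}^k$ chosen from within the small connected joining set supplied by $g$ at scale $k$, so that $\parallel P_{k+1} - P_k \parallel_\infty \leq 2^{-k}$ holds automatically. The sequence then converges uniformly to a continuous $\phi : [0,1] \rightarrow C$ with $\phi(0) = \zeta_0$ and $\phi(1) = \zeta_1$, with the image inside $C$ since $C$ is closed.

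The main obstacle is arranging that $\phi$ is \emph{injective}, so that its image is a genuine arc rather than merely a path. Following the techniques of \cite{Daniel.McNicholl.2012}, this is handled by a uniform loop-pruning step built into the stage-by-stage construction: whenever two partition points $z_j^k$ and $z_i^k$ with $j < i$ can be detected (from the plot together with $g$) to lie in a common small connected subset of $C$ provided by CIK, one excises the intermediate portion of the chain and rejoins the endpoints through that subset, then reparameterizes. Done compatibly under refinement so that earlier prunings are never undone, the pruned chains continue to form a strongly Cauchy sequence, and the limit is an injective continuous function whose image is the desired arc $A$.
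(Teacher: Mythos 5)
The paper does not actually prove this theorem; it cites \cite{Daniel.McNicholl.2012}, so your chain-refinement strategy can only be measured against the classical template it is meant to effectivize. Up to the construction of a continuous $\phi : [0,1] \rightarrow C$ with $\phi(0) = \zeta_0$ and $\phi(1) = \zeta_1$, your outline is sound: the adjacency-graph search in a sufficiently fine plot does produce $2^{-g(k)}$-chains (connectivity of $C$ forces the intersection graph of the plot rectangles to be connected), and the CIK modulus does let you refine a chain inside small connected pieces so that $\parallel P_{k+1} - P_k \parallel_\infty$ is controlled. That much yields effective \emph{path} connectivity.

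The gap sits exactly where you locate the main obstacle: the loop-pruning step. Two problems. First, excising a detected loop is harmless only if the loop is small; if at stage $k+1$ you discover that $z_j^{k+1}$ and $z_i^{k+1}$ are close in $\C$ while far apart along the chain, the excised portion can have large diameter, and deleting it moves the curve by that diameter in sup norm, destroying the strongly Cauchy property. To rule this out you must maintain, as an explicit inductive invariant, that each stage's chain is \emph{simple} in a quantified sense --- non-consecutive links are separated by a definite positive distance tied to the stage --- so that any loop first visible at stage $k+1$ already has diameter $O(2^{-k})$. Your proposal asserts this is ``done compatibly under refinement'' but never establishes the invariant, and that invariant is essentially the whole content of the effective argument. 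Second, even if every finite stage is injective, a uniform limit of injections need not be injective; injectivity of $\phi$ has to be extracted from the persistent separation of non-adjacent links in the limit (alternatively, within this paper's framework one could compute the arc as a compact set and then invoke Theorem \ref{thm:COMPUTE.PARAM}(2) to obtain a parameterization). A smaller but real point: whether two chain points lie in a common small connected subset is not decidable from the given data, only semi-decidable via an approximate distance test with a gap, so ``detection'' must be formulated as a threshold comparison that your invariant tolerates.
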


\begin{theorem}\label{thm:COMPUTE.PARAM}
\begin{enumerate}
	\item From a name of an arc $A \subseteq \C$, it is possible to uniformly compute a name of $A$ as a compact set as well as a CIK function for $A$.
	
	\item From a name of an arc $A \subseteq \C$ as a compact set and a CIK function for $A$, it is possible to uniformly compute a name of $A$.
\end{enumerate}
\end{theorem}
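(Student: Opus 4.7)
The proof splits into the two parts (1) and (2).

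\emph{Part (1).} A name of $A$ is a strongly Cauchy sequence $(P_t)$ of rational polygonal curves converging uniformly to a parametrization $f:[0,1]\to A$. From $(P_t)$ one computes a modulus of continuity $m$ for $f$ and, by covering the image of $P_{n+1}$ by rational rectangles of diameter about $2^{-n}$ and certifying via $\|P_s-P_{n+1}\|_\infty\le 2^{-n-1}$ that each covering rectangle meets $A$, a $\kappa_{mc}$-name of $A$. For the CIK function, exploit that $f$ is a continuous bijection on the compact set $[0,1]$, hence a homeomorphism. A modulus $\omega$ for $f^{-1}$ is computable from a name of $f$: the continuous function $(s,t)\mapsto|f(s)-f(t)|$ is strictly positive on the compact set $\{(s,t)\in[0,1]^2:|s-t|\ge 2^{-n}\}$, and its minimum is a positive computable real, so we can find $\omega(n)$ with $|f(s)-f(t)|\le 2^{-\omega(n)}\Rightarrow|s-t|<2^{-n}$. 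Combining $\omega$ with $m$ yields the CIK function: the preimage under $f$ of $D_{2^{-g(k)}}(z_0)\cap A$ lies in a short interval around $f^{-1}(z_0)$, whose image under $f$ is a connected subset of $A\cap D_{2^{-k}}(z_0)$ that still contains $D_{2^{-g(k)}}(z_0)\cap A$.

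\emph{Part (2).} The plan is to reduce to Theorem~\ref{thm:COMPUTE.ARC} applied with $C=A$: if we can compute names of the two endpoints $\zeta_0,\zeta_1$ of $A$, then that theorem returns a parametrization of $A$, since $A$ is itself the unique arc in $A$ from $\zeta_0$ to $\zeta_1$ (any sub-arc of $A$ containing the two non-cut points must equal $A$, because removing any other point of $A$ separates $\zeta_0$ from $\zeta_1$). The endpoints are characterized locally by the dichotomy that for all sufficiently small $\delta>0$ the set $A\cap D_\delta(p)\setminus\{p\}$ is connected at an endpoint and has exactly two components at an interior point. To produce endpoint approximations, read off from the compact set name a $2^{-n}$-net $F_n\subseteq A$, and construct on $F_n$ an adjacency graph in which $p,q$ are joined when they can be linked by a short sub-arc of $A$; this relation is semi-decidable using the CIK function and local applications of Theorem~\ref{thm:COMPUTE.ARC} on balls $D_{2^{-h(k)}}(z_0)\cap A$. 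For fine $n$ this graph is a combinatorial path, and its two degree-one vertexes converge as $n\to\infty$ to the endpoints $\zeta_0,\zeta_1$.

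\emph{Main obstacle.} The delicate part is Part (2): effectively certifying from only a compact set name and a CIK function that a candidate point is an endpoint rather than an interior point. The CIK function provides a connected ``arm'' $C\subseteq A\cap D_{2^{-k}}(z_0)$ exhausting the neighborhood $D_{2^{-h(k)}}(z_0)\cap A$, and for an endpoint this single arm carries all of $A$ near $z_0$ while for an interior point a second disjoint arm persists at every scale. Formalizing this dichotomy as a computable relation on the nets $F_n$, proving that the combinatorial path structure stabilizes at sufficiently fine scales, and verifying uniform sup-norm convergence of the resulting endpoint approximations is the core technical work; once done, Theorem~\ref{thm:COMPUTE.ARC} finishes the job.
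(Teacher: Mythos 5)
The paper does not actually prove this theorem; it only states that both parts ``follow from the results in'' the cited paper of Daniel and McNicholl, so there is no in-text argument to compare against. Judged on its own, your Part (1) is essentially correct and complete: a modulus of continuity for $f$ is computable from the strongly Cauchy sequence, the $\kappa_{mc}$-name of the range is routine, and the modulus for $f^{-1}$ obtained from the computable positive minimum of $|f(s)-f(t)|$ on $\{(s,t): |s-t|\ge 2^{-n}\}$ composes with the modulus of $f$ to give a CIK function whose witnessing connected set is $f(I)$ for a short parameter interval $I$. Your reduction of Part (2) to computing names of the two non-cut points and then invoking Theorem \ref{thm:COMPUTE.ARC} with $C=A$ is also sound, since the only arc in $A$ joining its two non-cut points is $A$ itself.

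The genuine gap is in the endpoint computation, which is where all of the difficulty of Part (2) lives. The local dichotomy you base it on is false: it is not true that $A\cap D_\delta(p)\setminus\{p\}$ is connected for all small $\delta$ when $p$ is an endpoint. Take $h(0)=0$ and $h(t)=\tfrac{t}{3}\bigl(2+\sin(1/t)\bigr)e^{i\pi t}$ for $t\in(0,1]$; this is a continuous injection (the argument $\pi t$ is injective on $(0,1]$ and $|h(t)|\le t\to 0$), so its image is an arc with endpoint $0$. For every small $\delta$ the set $\{t>0: |h(t)|<\delta\}$ contains $(0,\delta)$, excludes $(3\delta,1]$, and on $(\delta,3\delta)$ the modulus oscillates across the level $\delta$ many times, so this set, and hence $A\cap D_\delta(0)\setminus\{0\}$, is disconnected for all small $\delta$. (Interior points can likewise have arbitrarily many components, not exactly two.) Consequently the degree-one-vertex criterion for your adjacency graphs has no correctness proof. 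Moreover, the proposed way of semi-deciding adjacency by ``local applications of Theorem \ref{thm:COMPUTE.ARC} on balls $D_{2^{-h(k)}}(z_0)\cap A$'' is not available: that intersection need not be connected, and a $\kappa_{mc}$-name for it is not uniformly computable from a name of $A$ (one cannot decide which parts of $A$ meet the boundary circle). Since you also defer the stabilization and effective-convergence arguments, Part (2) remains unproved; note that computing the endpoints from a compact-set name and a CIK function is essentially equivalent in strength to the whole of Part (2), so this step cannot be waved through.
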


\begin{theorem}\label{thm:ULAC.CIK}
\begin{enumerate}
	\item Every ULAC function is a CIK function.
	
	\item It is possible to uniformly compute, from a name of a compact set $X \subseteq C$ and a CIK function for $X$, a ULAC function for $X$.  
\end{enumerate}
\end{theorem}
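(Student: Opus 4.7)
For part (1), my plan is to show directly that every ULAC function $f$ is itself a CIK function. Given $z_0 \in X$ and $k \in \N$, I would take $C_{z_0, k}$ to be the union of $\{z_0\}$ with every arc $A \subseteq X$ that starts at $z_0$, ends at some point of $D_{2^{-f(k)}}(z_0) \cap X$, and has diameter less than $2^{-k}$. The ULAC hypothesis supplies such an arc for each $z \in D_{2^{-f(k)}}(z_0) \cap X$ with $z \neq z_0$, so $D_{2^{-f(k)}}(z_0) \cap X \subseteq C_{z_0, k}$. Each such arc meets $z_0$ and has diameter $< 2^{-k}$, so it lies in $D_{2^{-k}}(z_0) \cap X$, giving $C_{z_0, k} \subseteq D_{2^{-k}}(z_0) \cap X$. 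A union of connected sets sharing a common point is connected, so $C_{z_0, k}$ is connected and we are done.

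For part (2), the ULAC function itself would be set trivially as $f(k) := g(k+2)+1$, which is computable from $g$ without consulting the name of $X$. What requires work is verifying that this $f$ has the ULAC property. Given distinct $z_0, z_1 \in X$ with $|z_0 - z_1| \leq 2^{-f(k)}$, the CIK property at parameter $k+2$ produces a connected $C$ with $z_0, z_1 \in C \subseteq D_{2^{-(k+2)}}(z_0) \cap X$; its closure $K := \overline{C}$ is then a subcontinuum of $X$ of diameter at most $2^{-(k+1)} < 2^{-k}$ containing both endpoints. My plan is to extract the desired arc by applying Theorem~\ref{thm:COMPUTE.ARC} to $K$, which means I need to certify that $K$ carries a CIK function computable from $g$.

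The main obstacle is precisely this last step. For $w \in K$ lying well inside $D_{2^{-(k+2)}}(z_0)$, the CIK set $C_{w, j}$ for $X$ automatically fits inside $K$ once $j$ is large enough, but for $w$ near the boundary of the controlling disk $C_{w, j}$ may protrude out of $K$, and $\overline{C}$ need not itself be locally connected. I would absorb this by inflating the index slightly in the definition of $f$ (for instance taking $f(k) := g(k+3)+1$) and replacing $K$ with the connected component of $z_0$ in a slightly enlarged compact set whose interior in $X$ already contains $C$ in full, so that boundary points of the controlling disk no longer influence the local connectivity argument. Once $g$ restricts to a CIK function on this enlarged subcontinuum, Theorem~\ref{thm:COMPUTE.ARC} delivers an arc of diameter $< 2^{-k}$ from $z_0$ to $z_1$, and the ULAC property is established. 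These adjustments follow the pattern of the technical manipulations in \cite{Daniel.McNicholl.2012}.
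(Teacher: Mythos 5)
The paper states this theorem without proof, deferring to \cite{Daniel.McNicholl.2012}, so there is no in-paper argument to compare against line by line. Your part (1) is correct and complete: the union of $\{z_0\}$ with all arcs in $X$ of diameter less than $2^{-k}$ that start at $z_0$ and end at a point of $D_{2^{-f(k)}}(z_0) \cap X$ is connected (all pieces share $z_0$), is contained in $D_{2^{-k}}(z_0) \cap X$, and by the ULAC property contains $D_{2^{-f(k)}}(z_0) \cap X$.

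Part (2) has a genuine gap, and it sits exactly where you locate it. The computability content is indeed trivial (a candidate is $f(k) := g(k+2)+1$, computed from $g$ alone), so everything rests on verifying that two points of $X$ at distance at most $2^{-f(k)}$ are joined by an arc in $X$ of diameter less than $2^{-k}$; note this is a pure existence claim, so Theorem \ref{thm:COMPUTE.ARC} is being used only as an arcwise-connectivity theorem. Your route requires certifying a CIK function for a small compact piece $K$ containing both points, and no piece of the kind you describe can be so certified: whether you take $\overline{C}$, or $\overline{D_\rho(z_0)} \cap X$, or the closure of a component of $D_\rho(z_0) \cap X$, the relative boundary of the controlling disk is a locus where connectedness im kleinen can fail (imagine $X$ crossing the circle $|z - z_0| = \rho$ along infinitely many strands accumulating at a boundary point, the strands being reunited only far away). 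Inflating $\rho$ merely relocates this boundary; it does not remove it, so the proposed fix is circular --- certifying local connectivity of the small piece is essentially the uniform local arcwise connectivity you are trying to prove. The standard repair avoids compact pieces altogether: a compact set with a CIK function is connected im kleinen at every point, hence locally connected, so each of its components is a Peano continuum, and connected \emph{open} subsets of a Peano space are arcwise connected (Hocking--Young, Chapter 3, or an equivalent chain argument). Taking $U$ to be the component of $z_0$ in the relatively open set $D_{2^{-(k+2)}}(z_0) \cap X$, the CIK property at level $k+2$ forces $z_1 \in U$, and any arc from $z_0$ to $z_1$ inside $U$ has diameter at most $2^{-(k+1)} < 2^{-k}$. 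That classical theorem is the ingredient your outline is missing.
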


\section{The insufficiency of plottability}\label{sec:NO.ACCESS}

\begin{theorem}\label{thm:NO.ACCESS}
The origin belongs to an arc $A$ from $-1$ to $1$ that is computable as a compact set and which has the property that $C \cap (A - \{0\}) \neq \emptyset$ whenever $C$ is a computable curve from $-i$ to $0$.
\end{theorem}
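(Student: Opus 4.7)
The plan is to construct $A$ by a priority-style effective construction that diagonalizes against every Turing machine $M_e$ which might compute a name of a curve from $-i$ to $0$. I would produce a strongly Cauchy sequence of rational polygonal arcs $P_0, P_1, \ldots$, each an arc from $-1$ to $1$ passing through $0$ with $P_s(1/2) = 0$, $P_s([0,1/2])$ lying in the closed left half-plane, and $P_s([1/2,1]) = [0,1]$, and with $\parallel P_s - P_{s+1}\parallel_\infty < 2^{-s-1}$. The limit $\phi = \lim_s P_s$ parametrizes the desired arc $A$, and the plots associated with the $P_s$ constitute a name of $A$ as a compact set.

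The blocking mechanism is topological. Assign to each requirement $R_e$ (``either $M_e$ fails to name a curve $C_e$ from $-i$ to $0$, or $C_e \cap (A \setminus \{0\}) \neq \emptyset$'') the annular arena $U_e = \{z : 2^{-e-1} \leq |z| \leq 2^{-e}\}$. The $U_e$ are pairwise disjoint. Any curve from $-i$ to $0$ must contain a sub-arc inside $U_e$ with one endpoint on $\partial D_{2^{-e}}(0)$ and the other on $\partial D_{2^{-e-1}}(0)$; from a name of $C_e$ together with its induced modulus of continuity one can effectively locate approximations $c_{\text{in}}^e, c_{\text{out}}^e$ to such a pair of endpoints. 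Meanwhile, $A \cap U_e$ consists of two sub-arcs $A_1, A_2$, one for parameter times just below $1/2$ (in the left half-plane) and one just above $1/2$ (on the positive real axis); when arranged to be disjoint and non-winding around $0$, they separate $U_e$ into two open components. The strategy for $R_e$ modifies $P_s$ within $U_e$ so that $c_{\text{in}}^e$ and $c_{\text{out}}^e$ lie in opposite components of $U_e \setminus (A_1 \cup A_2)$; by planar separation, $C_e$ must then cross $A_1 \cup A_2$.

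The main obstacle is coordinating the construction so that modifications for $R_e$ do not disturb blocks already established for earlier $R_f$ with $f < e$, while still ensuring the limit is a genuine arc and that $A$ is computable as a compact set. Pairwise disjointness of the $U_e$, combined with reserving pairwise disjoint dyadic subintervals of $[0,1]$ adjacent to $1/2$ as the parameter domains for each requirement's modifications, handles the non-interference. The bound $\diam(U_e) < 2^{-e+1}$ drives the strong Cauchy convergence, and localized polygonal modifications keep each $P_s$ injective, a property that descends to the continuous limit $\phi$. A subtler point is handling the case when $M_e$'s output does not stabilize into a valid name: the construction simply waits indefinitely for $M_e$ to produce an approximation accurate enough to commit a separating configuration, and acts only upon success, leaving $R_e$ vacuously satisfied otherwise.
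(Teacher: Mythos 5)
Your overall framework (effective enumeration of candidate curves, wait-and-act requirements, planar separation as the blocking tool) is the right kind of argument, but the specific blocking mechanism has a fatal timing-versus-size conflict, and this conflict is precisely the difficulty the theorem is about. The position to which the sub-arc $A_1$ must be relocated inside the fixed annulus $U_e$ depends on where $C_e$ crosses that annulus, and you can only learn this by waiting for $M_e$ to emit sufficiently fine approximations. So the adversary controls both \emph{where} $A_1$ must go --- anywhere in $U_e$, so the relocation can have size comparable to $\diam(U_e)\approx 2^{-e}$ in sup norm and, since the old and new positions of $A_1$ can be far from each other and from the rest of the arc, also in Hausdorff distance --- and \emph{when} you learn it, namely at an arbitrarily late stage $s$. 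If $R_e$ acts at stage $s\gg e$, the relocation violates $\parallel P_s - P_{s+1}\parallel_\infty < 2^{-s-1}$; worse, it moves the compact set by roughly $2^{-e}\gg 2^{-s}$, so the plots already emitted are invalidated and $A$ fails to be computable even as a compact set. Your sentence ``the bound $\diam(U_e)<2^{-e+1}$ drives the strong Cauchy convergence'' conflates the requirement index $e$ with the stage number $s$. The natural repairs do not work: morphing gradually is impossible because the total movement available after stage $s$ is at most $\sum_{s'\geq s}2^{-s'-1}=2^{-s}$, too little to cross $U_e$; pre-installing a static barrier is impossible because $0$ is topologically accessible from $-i$ in the complement of $A-\{0\}$, so no fixed configuration of an arc in $U_e$ blocks every crossing; and reassigning $R_e$ to a deeper annulus $U_m$ with $m\geq s$ fails because locating a crossing of a smaller annulus requires still finer approximations of $C_e$, so the stage at which you can commit may permanently outrun whichever annulus you have reserved.

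The paper's construction is shaped to evade exactly this. When $R_e$ acts at stage $t+1$ it does not move the arc to where the curve is; it erects two thin parallel walls $B_1,B_2$ hugging a subarc $B$ of the \emph{current} arc through $0$, at distance less than $2^{-k}$ with $k\geq t$, so the modification has size at most $2^{-t}$ no matter how late the requirement acts. The walls succeed not by separating the curve's observed crossing points but by exploiting the fact that the curve must return to $0\in A$: once the walls are in place, $0$ can only be approached through the thin channels they bound, and the already-revealed portion $C_e[t_0,t_1]$ was certified to lie at distance greater than $2^{-k}$ from the arc, hence outside those channels, forcing a later crossing of $B_1\cup B_2\subseteq A-\{0\}$. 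Two smaller points: a uniform limit of injective maps need not be injective (you need local stabilization of the parameterization, not ``descent'' of injectivity), and the theorem only claims computability of $A$ as a compact set --- the paper's re-routing deliberately abandons convergence of the parameterizations, which your strongly Cauchy scheme cannot afford to do.
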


\begin{proof}
We use a diagonalization argument.  
We build $A$ by stages $A_0, A_1, \ldots$.  Each $A_t$ is a polygonal arc with all angles right that goes through $0$.  

Let $S_e = (-2^{-(e+1)}, 2^{-(e+1)})^2$.

Let $\{C_{e,t}\}_{e \in \N, t < k_e}$ be an effective enumeration of all possibly finite, computable, and strongly Cauchy sequences of rational polygonal curves.  If $k_e = \omega$, then let 
$C_e = \lim_t C_{e,t}$.  If $1 \leq k_e < \omega$, then let $C_e = C_{e, k_e - 1}$.  
Otherwise, let $C_e = \emptyset$.  

For each $e$, let $R_e$ be the requirement
\[
R_e : k_e = \omega\ \wedge\ C_e(1) = 0\ \wedge\ C_e(0) \neq 0\ \Rightarrow\ \exists t\ C_e(t) \in A - \{0\}.
\]
\noindent\bf Stage $\mathbf 0$:\rm\ Let $A_0 = [-1,1] \times \{0\}$.  No requirement acts at stage $0$.\\

\noindent\bf Stage $\mathbf t+1$:\rm\ Let us say that $R_e$ \emph{requires attention at stage $t+1$} if after $t$ steps of computation it can be determined that there are rational numbers $0 < t_0 < t_1 < 1$ such that 
\begin{itemize}
	\item $C_e[0, t_0] \cap \overline{S_e} = \emptyset$, 
	\item $C_e[t_0, t_1] \cap S_e \neq \emptyset$,
	\item $C_e[t_1, 1] \subseteq S_e$,
	\item $d(C_e[t_0,t_1], A_t) > 0$, and 
	\item $R_e$ has not acted at any previous stage.
\end{itemize}
If no $R_e$ requires attention at stage $t+1$, then go on to the next stage.  Otherwise, 
let $e$ be the least number such that $R_e$ requires attention at stage $t+1$.  We say that $R_e$ \emph{acts at stage $t+1$}.  
Compute $k \in \N$ such that $k \geq t$, and $2^{-k} < d(C_e[t_0, t_1], A_t)$.
Compute $p_1, p_2 \in (A_t - \overline{S_e}) \cap \bigcap_{e' < e} S_{e'}$ such that $0$ is between $p_1$ and $p_2$ on $A_t$ and the intersection of $S_e$ with the subarc of $A_t$ from $p_1$ to $p_2$ has exactly one connected component.  

Let $q_j$ be a point on $A_t$ between $p_j$ and $0$ such that the subarc of $A_t$ from $p_j$ to $q_j$ lies outside $\overline{S_e}$.\
Let $B$ denote the subarc of $A_t$ from $q_1$ to $q_2$.  We create two parallel copies of $B$, $B_1$ and $B_2$, such that $B$ lies between them and 
\[
B_1 \cup B_2 \subseteq \{z \in \C\ :\ d(z, B) < 2^{-k}\}.
\]
We also construct $B_1$ and $B_2$ so that they contain no point of $A_t$ and so that $B_j \cap S_e$ has only one component for $j = 1, 2$.
  Let $p_{i,j}$ be the endpoint of $B_j$ closest to $p_i$.  

We form $A_{t+1}$ from $A_t$ as follows.  We first remove the subarc of $A_t$ from $q_1$ to $p_1$.  We then add a right angle polygonal arc from $p_1$ to $p_{1,2}$ and the arc $B_2$.  We then remove the subarc from $q_2$ to $p_2$.  We add a right angle polygonal arc from $p_{2,2}$ to $q_2$.  We then add a right angle polygonal arc from $q_1$ to $p_{1,1}$ and the arc $B_1$.  We then add a right angle polygonal arc from $p_{2,1}$ to $p_2$.  

Thus, $S_e - A_{t+1}$ has two more connected components than $S_e - A_t$.  
One of these connected components is bounded by $B_1$, $B$, and the
line segments along the sides of $S_e$ from $B_1$ to $B$.  The other is bounded by 
$B_2$, $B$, and the
line segments along the sides of $S_e$ from $B_2$ to $B$.  Thus, $0$ is a boundary point of each of these components.  However, by the choice of $k$, if $k_e = \omega$, then $C_e$ can not enter either of these components without crossing either $B_1$ or $B_2$.  
If a requirement $R_{e'}$ with $e' < e$ acts at a later stage, its action will further split $B$, $B_1$, and $B_2$, but this will not make things any better for $C_e$.  If a requirement $R_{e'}$ with $e' > e$ acts at a later stage, then $B$ will be further divided, but the situation for $C_e$ will remain the same.    
Thus, $R_e$ is satisfied if it ever acts.  On the other hand, if $C_e$ is a curve from $-i$ to $0$ that contains no point of $A$ but $0$, then $R_e$ must eventually act.  So, every requirement is satisfied. 

It now follows that each requirement is satisfied and that $A =_{df} \lim_t A_t$, where the limit is taken with respect to the Hausdorff metric, is computable as a compact set.  The only non-cut points of $A$ are $-1$ and $1$.  Thus, $A$ is an arc.
\end{proof}

In \cite{Gu.Lutz.Mayordomo.2009}, an arc is constructed that is computable \emph{as a curve} but not as an arc.  That is, it has the property that it is the range of a computable function on $[0,1]$, but is not the range of any computable injective function on $[0,1]$.  Thus, Theorem \ref{thm:NO.ACCESS} is in fact stronger than the assertion that there is no accessing arc.  

\section{Computing links}\label{sec:COMP.LINKS}\label{sec:COMPUTING.LINKS}

We begin with two results which are purely topological but will drive our constructions later.  

\begin{proposition}\label{prop:CONNECTED}
Suppose $\gamma$ is a Jordan curve and that $A \subseteq \overline{\Int(\gamma)}$ is an arc such that at most one endpoint of $A$ belongs to $\gamma$.  Then, $\Int(\gamma) - A$ is connected.
\end{proposition}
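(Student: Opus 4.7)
The plan is to reduce the statement to the case where exactly one endpoint of $A$ lies on $\gamma$, and then handle that case by a crosscut argument combined with Proposition \ref{prop:CONNECTED.ACCUMULATION}.

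First I would treat the main case, in which $A$ has one endpoint $\zeta_1 \in \gamma$ and the other endpoint $\zeta_0 \in \Int(\gamma)$. I would construct an auxiliary arc $\beta \subseteq \overline{\Int(\gamma)}$ from $\zeta_0$ to a point $\zeta_0' \in \gamma$ with $\beta \cap A = \{\zeta_0\}$ and $\beta \cap \gamma = \{\zeta_0'\}$. Then $A \cup \beta$ is a crosscut of the Jordan domain $\Int(\gamma)$. Applying the Jordan curve theorem to each of the two Jordan curves obtained by joining $A \cup \beta$ with one of the two sub-arcs of $\gamma$ between $\zeta_1$ and $\zeta_0'$, I would deduce a partition $\Int(\gamma) - (A \cup \beta) = \Omega_1 \sqcup \Omega_2$ into two Jordan subdomains, both of which contain $A \cup \beta$ in their boundaries. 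The open sub-arc $\beta^\circ = \beta - \{\zeta_0, \zeta_0'\}$ then lies in $\overline{\Omega_1} \cap \overline{\Omega_2}$, so by Proposition \ref{prop:CONNECTED.ACCUMULATION} each $\Omega_i \cup \beta^\circ$ is connected; as these two connected sets share the nonempty set $\beta^\circ$, their union
\[
\Omega_1 \cup \Omega_2 \cup \beta^\circ\ =\ \Int(\gamma) - A
\]
is connected.

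For the remaining case, in which both endpoints $\zeta_0, \zeta_1$ of $A$ lie in $\Int(\gamma)$, I would choose an auxiliary arc $\beta \subseteq \overline{\Int(\gamma)}$ from $\zeta_1$ to a point $\zeta_1' \in \gamma$ meeting $A$ only at $\zeta_1$ and $\gamma$ only at $\zeta_1'$, so that $A \cup \beta$ falls under the main case. Hence $\Int(\gamma) - (A \cup \beta)$ is connected. As before, every interior point of $\beta$ lies in $\Int(\gamma) - A$ and is a limit of points of $\Int(\gamma) - (A \cup \beta)$ (small disks around it meet $\Int(\gamma) - (A \cup \beta)$ on either side of $\beta$), so Proposition \ref{prop:CONNECTED.ACCUMULATION} again gives $\Int(\gamma) - A = (\Int(\gamma) - (A \cup \beta)) \cup \beta^\circ$ connected.

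The main technical point will be the construction of the auxiliary arc $\beta$ reaching $\gamma$ from an interior endpoint of $A$ while avoiding the rest of $A$. I would exploit the fact that such an endpoint is a non-cut point of $A$ (Proposition \ref{prop:ARC}) to begin $\beta$ in a direction locally off $A$, and then extend to $\gamma$ inside $\Int(\gamma)$ via Lemma \ref{lm:POLY.ARC} together with a small perturbation keeping the path away from $A$. A secondary ingredient is the crosscut theorem used in the main case, which follows directly from the Jordan curve theorem applied to the two auxiliary Jordan curves described above.
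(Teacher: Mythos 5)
Your overall architecture (extend $A$ to a crosscut of $\Int(\gamma)$, split the domain into two Jordan subdomains $\Omega_1,\Omega_2$ by the crosscut theorem, and reglue them along $\beta^\circ$ via Proposition \ref{prop:CONNECTED.ACCUMULATION}) is sound and genuinely different from the paper's argument, which instead maps $\Int(\gamma)$ onto $\D$ by Carath\'eodory, joins two given points of $\D-A$ by an arc in $\C-A$ (using the fact that the complement of an arc in the plane is connected), and reroutes any excursion outside $\D$ along a circular arc concentric with $\partial\D$ that misses $A$. But your proof has a genuine gap exactly where you locate ``the main technical point'': the construction of $\beta$. What you need is that the interior endpoint $\zeta_0$ of $A$ is \emph{accessible} from $\Int(\gamma)-A$, i.e., that there is an arc ending at $\zeta_0$ and meeting $A$ only at $\zeta_0$. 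This is true, but it is a theorem of essentially the same depth as the proposition being proved (it is usually obtained from the Schoenflies theorem for arcs, or from Carath\'eodory continuity applied to the simply connected domain $\hat{\C}-A$), and the justification you offer does not deliver it. That $\zeta_0$ is a non-cut point of $A$ (Proposition \ref{prop:ARC}) is a statement about the intrinsic topology of $A$ and says nothing about how $A$ is embedded near $\zeta_0$: there need not be any ``direction locally off $A$,'' since $A$ may spiral infinitely often around its endpoint, so that every line segment emanating from $\zeta_0$ meets $A-\{\zeta_0\}$ in points accumulating at $\zeta_0$. Accessibility still holds in that example, but only via an arc that itself spirals, and no small perturbation of a polygonal path produces it. Lemma \ref{lm:POLY.ARC} cannot start the arc either, since it requires both endpoints to lie in the open set, whereas $\zeta_0\in A$ and $\zeta_0'\in\gamma$ do not; a similar (smaller) accessibility issue arises at the far end, where $\beta$ must terminate on $\gamma$ while approaching it from inside $\Int(\gamma)-A$.

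Two further points. First, the crosscut theorem you invoke (a crosscut splits a Jordan domain into exactly two Jordan subdomains, each having the crosscut on its boundary) does not ``follow directly'' from two applications of the Jordan curve theorem---one must also check that the two interiors are disjoint and exhaust $\Int(\gamma)$ minus the crosscut---but it is standard and citable, so that is a matter of referencing rather than a gap. Second, for $A\cup\beta$ to be a crosscut you need its interior to lie in $\Int(\gamma)$, i.e., $A\cap\gamma$ must consist of at most the one endpoint; the stated hypothesis only restricts endpoints, but the paper's own proof makes the same tacit assumption, so this is not held against you. If the accessibility of $\zeta_0$ (and of a point of $\gamma$ from $\Int(\gamma)-A$) is granted with a citation, the rest of your argument---including the reduction of the no-endpoints-on-$\gamma$ case to the one-endpoint case and the use of Proposition \ref{prop:CONNECTED.ACCUMULATION} to absorb $\beta^\circ$---is correct.
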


\begin{proof}
By the Carath\'{e}odory Theorem (see, \emph{e.g.} Chapter I of \cite{Garnett.Marshall.2005}), we can assume $\gamma = \partial \D$.  Let $p,q \in \D - A$.  
We show there is an arc from $p$ to $q$ in $\D - A$.  By Theorem 4.5 of \cite{Moise.1977}, $\C - A$ is connected.  So, by Lemma \ref{lm:POLY.ARC}, it is also arcwise connected.  Let $B$ be an arc in $\C - A$ from $p$ to $q$.  If $B \subseteq \D$, there is nothing left to prove.  Suppose $B \not \subseteq \D$.  There is a point $p_1 \in B \cap \partial \D$ such that the subarc of $B$ from $p$ to $p_1$ intersects $\partial \D$ only at $p_1$.   There is a point $q_1 \in B \cap \partial \D$ such that the subarc of $B$ from $q$ to $q_1$ intersects $\partial \D$ only at $q_1$.  Hence, $q_1$ is not between $p$ and $p_1$ on $B$.  So, either $p_1 = q_1$ or $q_1$ is between $p_1$ and $q$ on $B$.  Let $B_1$ denote the subarc of $B$ from $p$ to $p_1$.  Let $B_2$ denote the subarc of $B$ from $q$ to $q_1$.
Since $A$ is compact, it follows that there is a point $p_1' \in B_1$ and a point $q_1' \in B_2$ such that $|p_1'| = |q_1'|$ and such that one of the circular arcs from $p_1'$ to $q_1'$ that is concentric with $\D$ contains no point of $A$.  For, otherwise, each subarc of $\partial \D$ from $p_1$ to $q_1$ contains a point of $A$.  Since $p_1, q_1 \not \in A$, these points would be distinct- a contradiction.  It then follows that there is an arc from $p$ to $q$ in $\D - A$.
\end{proof}

\begin{proposition}\label{prop:COMPONENT.ARC}
Let $D$ be an open disk, and let $A$ be an arc.  Let $C$ be a connected component of $D - A$.  Let $p \in A \cap \partial C \cap D$, and suppose $q \in A \cap D - \partial C$.  Then, the subarc of $A$ from $p$ to $q$ intersects the boundary of $D$.
\end{proposition}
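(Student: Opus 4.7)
The plan is to argue by contradiction. Denote by $A_{pq}$ the subarc of $A$ from $p$ to $q$, and suppose $A_{pq}$ avoids $\partial D$. Since both endpoints lie in the open set $D$, this forces $A_{pq} \subseteq D$. I would then derive a contradiction by exhibiting a nonempty, proper, clopen subset of the connected set $A_{pq}$.

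The subset in question is $E = A_{pq} \cap \overline{C}$. It is closed in $A_{pq}$ as the intersection with a closed set; it contains $p$ because $p \in \partial C \subseteq \overline{C}$; and it misses $q$, because $q \notin \partial C$ together with $q \in A$ (hence $q \notin C$) forces $q \notin \overline{C}$. So it suffices to show $E$ is also open in $A_{pq}$.

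To prove openness, fix $p' \in E$. A parameterization $f\colon [0,1] \to A$ is a homeomorphism onto its image (continuous bijection from a compact space to a Hausdorff space), so for $\epsilon > 0$ sufficiently small we have $D_\epsilon(p') \subseteq D$ and $\alpha' := A \cap D_\epsilon(p')$ is a single subarc of $A$, with both endpoints on $\partial D_\epsilon(p')$ in the generic case, or with $p'$ itself as one endpoint in the case that $p'$ is an endpoint of $A$. A Jordan-curve argument, closing $\alpha'$ with a circular arc of $\partial D_\epsilon(p')$, then shows that $D_\epsilon(p') - \alpha'$ has one or two connected components, each having all of $\alpha'$ on its boundary. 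Because $p' \in \overline{C}$, $C$ meets $D_\epsilon(p')$, so $C$ meets some such component $V$; since $V$ is a connected subset of $D - A$, maximality of $C$ gives $V \subseteq C$. Hence every point of $\alpha'$ lies in $\overline{V} \subseteq \overline{C}$, so $\alpha' \subseteq E$, and $A_{pq} \cap D_\epsilon(p') \subseteq \alpha'$ provides a neighborhood of $p'$ in $A_{pq}$ contained in $E$.

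The main obstacle I anticipate is the local separation claim: that $D_\epsilon(p') - \alpha'$ has the described component structure, with every point of $\alpha'$ on the boundary of each component. This is routine for polygonal or smooth arcs but takes care for a general continuous embedding of $[0,1]$. I would justify it via the Jordan curve theorem, applied to the simple closed curve obtained by adjoining one of the two circular arcs of $\partial D_\epsilon(p')$ cut off by the endpoints of $\alpha'$, and then use that an arc has empty planar interior to conclude that interior points of $\alpha'$ are approached from within each complementary component.
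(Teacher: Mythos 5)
Your overall skeleton --- deriving a contradiction by exhibiting $E = A_{pq} \cap \overline{C}$ as a nonempty, proper, clopen subset of the connected set $A_{pq}$ --- is sound and genuinely different from the paper's argument, which instead builds a single Jordan curve $J$ containing an enlarged copy of the subarc and uses conformal mapping (Carath\'eodory extension to an annulus or a disk) to show directly that $q \in \partial C$. But your proof of openness of $E$ rests on a local separation claim that is false as stated. For a general arc $A$ and $p' \in A$, it is \emph{not} true that $A \cap D_\epsilon(p')$ is a single subarc for all small $\epsilon$: take a parameterization $f$ with $f(t_0) = p'$ for which $|f(t) - p'|$ oscillates between $|t - t_0|$ and $3|t - t_0|$ at every scale (with the argument of $f(t) - p'$ strictly monotone to preserve injectivity); then $A \cap D_\epsilon(p')$ is disconnected for \emph{every} $\epsilon > 0$. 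The homeomorphism argument only yields that $A \cap D_\epsilon(p')$ is \emph{contained in} a prescribed small subarc around $p'$; that subarc may exit and re-enter $D_\epsilon(p')$ arbitrarily often, so $D_\epsilon(p') - A$ can have arbitrarily many (even infinitely many) components, and there is no a priori reason that the particular component $V$ meeting $C$ has all of $A \cap D_\epsilon(p')$ --- or even a full relative neighborhood of $p'$ in $A$ --- on its boundary. The Jordan-curve-plus-empty-interior repair you sketch handles only the case where the local piece of $A$ is an honest crosscut of the disk, which is exactly what can fail.

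The local fact you actually need (every point of $A \cap \overline{C} \cap D$ has a relative neighborhood in $A$ contained in $\overline{C}$) is true, but it carries essentially the whole weight of the proposition and cannot be had for free. The clean way to obtain it along your lines is to invoke the tameness of arcs in the plane (a consequence of the Schoenflies theorem, as in Moise's book): a plane homeomorphism carrying $A$ to a straight segment lets you replace $D_\epsilon(p')$ by a small Jordan neighborhood of $p'$ whose intersection with $A$ really is a crosscut, after which your two-component and maximality argument goes through. That input is of comparable depth to what the paper uses (the non-separation theorem for arcs together with Carath\'eodory's theorem), so your route is viable but not more elementary, and as written it has a genuine gap at its central step.
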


\begin{proof}
Let $B$ be the subarc of $A$ from $p$ to $q$.  By way of contradiction, suppose 
$B$ contains no point of the boundary of $D$.   Hence, since $p,q \in D$, $B \subseteq D$.  

Since $D$ is open, there are points $p_1', q_1' \in A$ be such that the subarc of $A$ from $p_1'$ to $q_1'$ is contained in $D$, 
$p$ is between $p_1'$ and $q$ on $A$, and $q$ is between $p$ and $q_1'$ on $A$.  
By Theorem 3-18 of \cite{Hocking.Young.1988}, there are points 
$p_1$ and $q_1$ on $A$ and points $p_2$, $q_2$ in $D - A$ such that 
$p_1$ is between $p_1'$ and $p$ on $A$, $q_1$ is between $q$ and $q_1'$ on $A$, 
$\overline{p_2p_1} \cap A = \{p_1\}$\footnote{Here, and elsewhere expressions of the form $\overline{zw}$ refer to the line segment from $z$ to $w$, not to the conjugate of $zw$.}, and $\overline{q_2q_1} \cap A = \{q_1\}$.  Let 
$B_1$ be the subarc of $A$ from $p_1$ to $q_1$.  By Proposition \ref{prop:CONNECTED}, $D - B_1$ is connected.

By Lemma \ref{lm:POLY.ARC}, there is a polygonal arc $P \subseteq D - B_1$ from $p_2$ to $q_2$.
It follows that there is an arc $\sigma \subseteq P \cup \overline{p_2p_1} \cup \overline{q_2q_1}$ from $p_1$ to $q_1$.  
(Namely, follow $\overline{p_1p_2}$ until $P$ is first reached, then follow $P$ until 
$\overline{q_2q_1}$ is first reached after which $\overline{q_2q_1}$ is followed until $q_1$ is reached.)  Hence, 
$\sigma \cap B_1 = \{p_1, q_1\}$.  Thus, $J =_{df} B_1 \cup \sigma$ is a Jordan curve.

We first consider the case where there are points of $C \cap \Ext(J)$ arbitrarily close to $p$.
Let $f$ be a conformal map of $D_1 =_{df} D - \overline{\Int(J)}$ onto an annulus 
$G =_{df} \{z\ |\ r_1 < |z| < r_2\}$.  By Theorem 15.3.4 of \cite{Conway.1995}, $f$ extends to a homeomorphism of $\overline{D_1}$ with $\overline{G}$; let $f$ denote this extension as well.  We can assume
$f$ maps $J$ onto the inner circle of $G$.  
It follows that $f(p), f(q) \in f[B_1] \subseteq \partial D_{r_1}(0)$.  Let $f(p) = r_1 e^{i \theta_1}$, and let
$f(q) = r_1 e^{i \theta_2}$.  Without loss of generality,  suppose $0 < \theta_1 < \theta_2 < 2\pi$.  We claim there is an $R > r_1$ and an $\epsilon > 0$ such that 
\[
\{r e^{i \theta}\ |\ \theta_1 - \epsilon < \theta < \theta_2 + \epsilon\ \wedge\ r_1 < r < R\} - f[A]
\]
is connected.  For, otherwise, there are points of $f[A - B_1]$ that are arbitrarily close to $f[B]$.  This entails that $B \cap (\overline{A - B_1}) \neq \emptyset$ which violates the assumption that $A$ is an arc.  Since $C \cap \Ext(J)$ contains points arbitrarily close to $p$, it now follows that $q$ is a boundary point of $C$.

If there are points of $C \cap \Int(J)$ arbitrarily close to $p$,  then we proceed similarly except we first conformally map $\Int(J)$ onto $\D$.

Suppose by way of contradiction that neither of these cases holds.  Then, there is a positive number 
$\epsilon$ such that $D_\epsilon(p)$ contains no point of $C \cap \Ext(J)$ nor any point of $C \cap \Int(J)$.  Let $\epsilon_1$ be a positive number that is smaller than $\epsilon$ and that has the property that $D_{\epsilon_1}(p) \cap \sigma = \emptyset$.  Let $w$ belong to $D_{\epsilon_1}(p) \cap C$.  Thus, $w \in J$.  Hence, $w \in B_1 \subseteq A$; this is a contradiction since $C \subseteq D - A$.
\end{proof}

The following answers the first question raised in the introduction.

\begin{theorem}\label{thm:BOUNDARY.CONNECTED}\label{thm:BOUNDARY.COMPONENT}
Suppose $D$ is an open disk, $A$ is an arc with ULAC function $g$, and $\zeta_0 \in A \cap D$.  Suppose $\zeta_1 \in D - A$ is such that $|\zeta_0 - \zeta_1| < 2^{-g(k)}$ where $k \in \N$ is such 
that $2^{-g(k)} + 2^{-k} \leq \max\{d(\zeta_0, \partial D), d(\zeta_1, \partial D)\}$.  Then, $\zeta_0$ is a boundary point of the connected component of $\zeta_1$ in $D - A$.  
\end{theorem}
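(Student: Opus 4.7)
The plan is to argue by contradiction using Proposition \ref{prop:COMPONENT.ARC}. Let $C$ denote the connected component of $\zeta_1$ in $D - A$. Since $\zeta_0 \in A$ we have $\zeta_0 \notin C$, so proving $\zeta_0 \in \partial C$ reduces to proving $\zeta_0 \in \overline{C}$. I would assume for contradiction that $\zeta_0 \notin \overline{C}$, and then locate a point $p \in A \cap \partial C \cap D$ with $p \neq \zeta_0$ whose subarc in $A$ to $\zeta_0$ stays inside $D$; this contradicts Proposition \ref{prop:COMPONENT.ARC} applied with $q = \zeta_0$.

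To produce $p$, consider the line segment $L$ from $\zeta_1$ to $\zeta_0$. The hypothesis yields $d(\zeta_j, \partial D) \geq 2^{-g(k)} + 2^{-k} > |\zeta_0 - \zeta_1|$ for some $j \in \{0,1\}$, so every point of $L$ lies within $d(\zeta_j, \partial D)$ of $\zeta_j$, forcing $L \subseteq D$. Let $p$ be the point of $L \cap A$ closest to $\zeta_1$; this exists since $L \cap A$ is compact, non-empty (it contains $\zeta_0$), and avoids $\zeta_1$. Because $D - A$ is open in $\C$, its connected components, including $C$, are open. The subsegment of $L$ from $\zeta_1$ up to but not including $p$ is then a connected subset of $D - A$ that meets $C$, so it is contained in $C$, giving $p \in \overline{C} \cap A \subseteq \partial C \cap D$. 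If $p = \zeta_0$ we already have $\zeta_0 \in \partial C$, contradicting the assumption; hence $p \neq \zeta_0$.

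Now $|p - \zeta_0| < |\zeta_1 - \zeta_0| < 2^{-g(k)}$, so the ULAC function $g$ produces an arc inside $A$ from $p$ to $\zeta_0$ of diameter strictly less than $2^{-k}$; by the uniqueness of subarcs between two points of an arc, which follows from Proposition \ref{prop:ARC}, this is the subarc $B$ of $A$ from $p$ to $\zeta_0$. The main technical step is to check $B \subseteq D$. If $d(\zeta_0, \partial D) \geq 2^{-g(k)} + 2^{-k}$, then every point of $B$ lies within $2^{-k}$ of $\zeta_0$ and hence in $D$. Otherwise $d(\zeta_1, \partial D) \geq 2^{-g(k)} + 2^{-k}$, and since $p \in L$ gives $|p - \zeta_1| < 2^{-g(k)}$, every point of $B$ lies within $2^{-k} + 2^{-g(k)}$ of $\zeta_1$ and hence in $D$. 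In either case $B \cap \partial D = \emptyset$, contradicting Proposition \ref{prop:COMPONENT.ARC} applied to $p \in A \cap \partial C \cap D$ and $q = \zeta_0 \in (A \cap D) \setminus \partial C$. The only real obstacle is this final distance bookkeeping, which is exactly what the hypothesis bounding $2^{-g(k)} + 2^{-k}$ from above by $\max\{d(\zeta_0,\partial D), d(\zeta_1,\partial D)\}$ is calibrated to handle.
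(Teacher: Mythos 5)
Your proposal is correct and follows essentially the same route as the paper: take the segment from $\zeta_1$ to $\zeta_0$, let $p$ be the point of $A$ on it closest to $\zeta_1$ (so $p \in \partial C$), use the ULAC bound to show the subarc of $A$ from $p$ to $\zeta_0$ has diameter less than $2^{-k}$ and hence stays in $D$ by the same case split on which of $d(\zeta_0,\partial D)$, $d(\zeta_1,\partial D)$ realizes the maximum, and conclude via Proposition \ref{prop:COMPONENT.ARC}. Your write-up is in fact slightly more careful than the paper's in verifying $L \subseteq D$ and in justifying that the ULAC-produced arc coincides with the subarc of $A$.
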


\begin{proof}
Let $l = \overline{\zeta_1\zeta_0}$.  If $l \cap A = \{\zeta_0\}$, then there is nothing left to prove.
So, suppose $l \cap A \neq \{\zeta_0\}$.  Let $p$ be the point in $l \cap A$ that is closest to $\zeta_1$.  Let $C$ be the connected component of $\zeta_1$ in $D - A$.  Hence, $p \in \partial C$.  Let $A_1$ be the subarc of $A$ from $p$ to $\zeta_0$.  Since $|p - \zeta_0| < 2^{-g(k)}$, the diameter of $A_1$ is smaller than $2^{-k}$.

We claim that $A_1 \subseteq D$.  For, suppose otherwise, and let $q \in \partial D \cap A_1$.  Hence, $|\zeta_0 - q| < 2^{-k}$.  Thus, $d(\zeta_0, \partial D) < 2^{-k} < 2^{-g(k)} + 2^{-k}$.  At the same time, 
\begin{eqnarray*}
|\zeta_1 -q| & \leq & |p - \zeta_1| + |p - q|\\
& < & 2^{-g(k)} + 2^{-k}.
\end{eqnarray*}
Hence, $d(\zeta_1, \partial D) < 2^{-g(k)} + 2^{-k}$.  This is a contradiction since 
$2^{-g(k)} + 2^{-k} \leq \max\{ d(\zeta_0, \partial D), d(\zeta_1, \partial D)\}$.  Hence, 
$A_1 \subseteq D$.

It now follows from Proposition \ref{prop:COMPONENT.ARC} that $\zeta_0$ is a boundary point of $C$.
\end{proof}

We now turn to the problem of computing accessing arcs.

\begin{theorem}\label{thm:ACCESS}
From a name of an arc $A$, a point $z_0 \in \D - A$, and a name of a point $\zeta_0 \in A \cap \D$ that is a boundary point of the connected component of $z_0$ in $\D - A$, it is possible to uniformly compute a name of an arc $Q$ that links $z_0$ to $\zeta_0$ via $\D - A$.
\end{theorem}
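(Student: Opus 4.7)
The plan is to construct a compact connected set $K$ with $\{z_0, \zeta_0\} \subseteq K \subseteq (\D \setminus A) \cup \{\zeta_0\}$, compute a name of $K$ as a compact set together with a CIK function for $K$, and then invoke Theorem \ref{thm:COMPUTE.ARC}. The arc it produces will have all intermediate points in $K \setminus \{\zeta_0\} \subseteq \D \setminus A$, so it links $z_0$ to $\zeta_0$ via $\D \setminus A$, as required.

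The set $K$ will be an initial rational polygonal path from $z_0$ to a rational point $w_0$ near $\zeta_0$, followed by a chain of rational polygonal arcs $Q_0, Q_1, \ldots$ joining rational points $w_0, w_1, \ldots$ converging to $\zeta_0$. First, use Theorem \ref{thm:COMPUTE.PARAM}(1) to compute a name of $A$ as a compact set, and from the name of $\zeta_0$ extract a rational $\rho_0 > 0$ with $\overline{D_{\rho_0}(\zeta_0)} \subseteq \D$. Then enumerate rational data to find: a rational $w_{-1}$ close to $z_0$ with $\overline{z_0 w_{-1}} \cap A = \emptyset$ and a rational polygonal arc $P \subseteq \D \setminus A$ from $w_{-1}$ to some $w_0 \in D_{\rho_0/2}(\zeta_0)$; and, at each stage $n \geq 0$, a rational $w_{n+1}$ and a rational polygonal arc $Q_n$ from $w_n$ to $w_{n+1}$ satisfying $|w_{n+1} - \zeta_0| < 2^{-(n+1)}$, $Q_n \subseteq D_{\rho_0}(\zeta_0)$, $\diam(Q_n) \leq 2^{-n+c}$ for a fixed constant $c$, and $Q_n \cap A = \emptyset$. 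Disjointness from $A$ is semi-decidable from the name of $A$ as a compact set (by finding a finite cover of $A$ by rational rectangles missing the relevant compact set), and the other conditions are decidable from rational approximations of $z_0$ and $\zeta_0$.

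The main obstacle is showing that these searches terminate, i.e., that the required $(w_{n+1}, Q_n)$ exist. This reduces to local arcwise connectivity of $C \cup \{\zeta_0\}$ at $\zeta_0$, where $C$ is the component of $z_0$ in $\D \setminus A$. For $r > 0$ sufficiently small, continuity of the inverse of a parameterization of $A$ forces $A \cap \overline{D_r(\zeta_0)}$ to lie in a single subarc of $A$ about $\zeta_0$. Proposition \ref{prop:COMPONENT.ARC} applied to $D_r(\zeta_0)$ places this subarc in the boundary of any component of $D_r(\zeta_0) \setminus A$ having $\zeta_0$ on its boundary, and Proposition \ref{prop:CONNECTED} bounds the total number of components of $D_r(\zeta_0) \setminus A$ by two. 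For each such component $E$, the set $E \cup \{\zeta_0\}$ is locally arcwise connected at $\zeta_0$ (since $\partial E$ near $\zeta_0$ is a simple arc), so any two points of $E$ sufficiently close to $\zeta_0$ can be joined within $E$ by a short arc, which Lemma \ref{lm:POLY.ARC} lets us take to be polygonal with rational vertices. Taking $\rho_0$ small enough (possible using finer rational approximations from the name of $\zeta_0$), the preliminary path places $w_0$ inside a component of $D_r(\zeta_0) \setminus A$ with $\zeta_0$ on its boundary, and all subsequent searches succeed within that component.

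Setting $K = \overline{z_0 w_{-1}} \cup P \cup \bigcup_n Q_n \cup \{\zeta_0\}$, the diameter control places the tail $\bigcup_{n \geq N} Q_n$ inside a shrinking disk around $\zeta_0$, so $K$ is compact, and by the disjointness conditions $K \cap A = \{\zeta_0\}$; Proposition \ref{prop:CONNECTED.ACCUMULATION} supplies connectedness. A name of $K$ as a compact set is obtained by combining plots of $\overline{z_0 w_{-1}}$, $P$, and $Q_0, \ldots, Q_N$ with a small rational rectangle around $\zeta_0$ covering the tail, for each $N$. A CIK function for $K$ is immediate from its piecewise-linear local structure away from $\zeta_0$ and the small-diameter connected tails at $\zeta_0$. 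Theorem \ref{thm:COMPUTE.ARC} applied to $K$ and the points $z_0, \zeta_0$ then outputs a name of the desired accessing arc.
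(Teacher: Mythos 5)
Your overall architecture (a chain of rational polygonal arcs found by unbounded search, shrinking to $\zeta_0$ and avoiding $A$, then assembled into the accessing arc) matches the paper's, but the step you yourself flag as ``the main obstacle'' is exactly where the proposal breaks down. At stage $n$ you demand a connecting arc $Q_n$ from $w_n$ to $w_{n+1}$ with $\diam(Q_n)\le 2^{-n+c}$ for a \emph{fixed} $c$, while only guaranteeing $|w_n-\zeta_0|<2^{-n}$. Local arcwise connectivity of $C\cup\{\zeta_0\}$ at $\zeta_0$ (which is true, though your justification is shaky: Proposition \ref{prop:CONNECTED} does not bound the number of components of $D_r(\zeta_0)\setminus A$ by two, since $A\not\subseteq\overline{D_r(\zeta_0)}$ and the count can exceed two, and ``$\partial E$ near $\zeta_0$ is a simple arc'' is unjustified) only yields: for every $\epsilon$ there is some $\delta(\epsilon)>0$ such that points of $C$ within $\delta(\epsilon)$ of $\zeta_0$ are joinable inside $C$ by arcs of diameter less than $\epsilon$. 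Nothing forces $\delta(2^{-n+c})\ge 2^{-n}$, so the conjunction ``$|w_{n+1}-\zeta_0|<2^{-(n+1)}$ and $\diam(Q_n)\le 2^{-n+c}$'' may be unsatisfiable and the search at stage $n$ need never terminate. The missing idea is to calibrate how fast the $w_n$ approach $\zeta_0$ against a \emph{computable} modulus: the paper computes a ULAC function $g$ for $A$ from its arc-name (via Theorems \ref{thm:COMPUTE.PARAM} and \ref{thm:ULAC.CIK}) and uses Theorem \ref{thm:BOUNDARY.CONNECTED} to show that if $|e_t-\zeta_0|<2^{-g(s_t)}$ then $\zeta_0$ remains a boundary point of the component of $e_t$ in $D_{2^{-g(s_t)}+2^{-s_t}}(e_t)-A$, which is what guarantees the next search succeeds inside a disk of radius roughly $2^{-s_t}$.

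The omission is not cosmetic. As described, your algorithm consumes only the compact-set name of $A$ (to semi-decide $Q_n\cap A=\emptyset$) together with names of $z_0$ and $\zeta_0$; the parameterization of $A$ appears only in your non-effective existence argument, not in the computation. Theorem \ref{thm:NO.ACCESS} shows that covering information about $A$ cannot suffice, so any correct algorithm must use the arc-name quantitatively, precisely through ULAC/CIK data. Two smaller points: the paper does not route through Theorem \ref{thm:COMPUTE.ARC}; it trims each $P_{j+1}$ at the first parameter where $P_j$ meets it and writes down the parameterization of $Q$ directly, which avoids having to produce a CIK function for your set $K$. Your claim that such a function is ``immediate'' also needs care, since later arcs $Q_m$ may pass arbitrarily close to earlier ones without meeting them, destroying uniform local connectivity of $K$ unless extra separation conditions are built into the search.
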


\begin{proof}
Compute an increasing
ULAC function for $A$, $g$.  Compute $s_0 \in \N$ such that $D_{2^{-s_0 + 2}}(\zeta_0) \subseteq \D$ and so that $2^{-s_0 + 2} < |z_0 - \zeta_0|$.    

Since $\zeta_0$ is a boundary point of the connected component of $q_0$ in $\D - A$, there is a rational point $e_0$ in this component such that 
$|e_0 - \zeta_0| < 2^{-g(s_0)}$.  It follows from Theorem 3-2 of \cite{Hocking.Young.1988} that this component is open.  Hence, there is a polygonal arc $P_0$ from 
$z_0$ to $e_0$ contained in $\D - A$.    It follows from Lemma \ref{lm:POLY.ARC} that such a point $e_0$ and such an arc $P_0$ can be discovered by a search procedure.  Namely, we search for distinct rational points $q_1, \ldots, q_k \in \D - A$ that satisfy the following conditions.
\begin{enumerate}
	\item $q_j \neq z_0$ when $j \in \{1, \ldots, k\}$.
	\item $|q_k - \zeta_0| < 2^{-g(s_0)}$.
	\item $\overline{z_0q_1} \cap \overline{q_1q_2} = \{q_1\}$.\label{C}
	\item $\overline{z_0q_1} \cap \overline{q_jq_{j+1}} = \emptyset$ when $1 < j < k$.
	\item $\overline{q_jq_{j+1}} \cap \overline{q_{j+1}q_{j+2}} = \{q_{j+1}\}$ when $1 \leq j < k - 1$.
	\item $\overline{q_jq_{j+1}} \cap \overline{q_mq_{m+1}} = \emptyset$ when $m > j+1$.
\end{enumerate}
Condition \ref{C} can be checked by checking that $\min\{d(z_0, \overline{q_1q_2}), d(q_2, \overline{z_0q_1})\} > 0$.  By Lemma \ref{lm:POLY.ARC}, we can also choose $q_1$ so that $|z_0 - q_1| < |z_0 - \zeta_0| - 2^{-s_0 + 2}$.  Thus, $\overline{z_0q_1}$ contains no point of the closed disk with center $\zeta_0$ and radius $2^{-s_0 + 2}$.

Now, by way of induction, suppose $|e_t - \zeta_0| < 2^{-g(s_t)}$, $s_t \geq t, s_0$.  
Let $\epsilon_t = 2^{-g(s_t)} + 2^{-s_t}$.   
We first note that 
\[
D_{\epsilon_t}(e_t) \subseteq D_{2^{-s_t + 2}}(\zeta_0).
\]
Since $s_t \geq s_0$, and since $D_{2^{-s_0 + 2}}(\zeta_0) \subseteq \D$, it follows that $D_{\epsilon_t}(e_t) \subseteq \D$.

Compute $s_{t+1} > \max\{s_t, t+1\}$ such that $d(\zeta_0, \bigcup_{s \leq t} P_s) > 2^{-s_{t+1} + 2}$.  
It follows from Theorem \ref{thm:BOUNDARY.CONNECTED} that $\zeta_0$ is a boundary point of the connected component of $e_t$ in $D_{\epsilon_t}(e_t) - A$.  Hence, there is a rational point $e_{t+1}$ that belongs 
to this connected component such that 
$|e_{t+1} - \zeta_0| < 2^{-g(s_{t+1})}$.    Since this component is open, there is a rational polygonal arc $P_{t+1}$ from $e_t$ to $e_{t+1}$ such that $P_{t+1} \subseteq D_{\epsilon_t}(e_t) - A$.  It follows from Lemma \ref{lm:POLY.ARC} that such a point $e_{t+1}$ and such an arc $P_{t+1}$ can be discovered through a search procedure.  Note that $P_{t+1} \subseteq D_{2^{-s_t + 2}}(\zeta_0)$.

Note that by construction, $P_1$ contains no point of $\overline{z_0q_1}$.  Therefore, for each $j \in \N$ we can compute the least $t_j$ such that $P_j(t_j)$ belongs to $P_{j+1}$.  Note that $t_j$ and $P_j(t_j)$ are rational.  By construction, $z_0 \neq P_0(t_0)$ and $P_j(t_j) \neq P_{j+1}(t_{j+1})$.  Let $Q_0$ be the subarc of $P_0$ from $z_0$ to $P_0(t_0)$.  Let $Q_{j+1}$ be the sub arc of $P_{j+1}$ from $P_j(t_j)$ to $P_{j+1}(t_{j+1})$.  
Define $Q(1)$ to be $\zeta_0$.  When $\frac{j}{j+1} \leq t \leq \frac{j+1}{j+2}$, define $Q(t)$ to be $Q_j(s)$ where
\[
s = \frac{t - \frac{j}{j+1}}{\frac{j+1}{j+2} - \frac{j}{j+1}}.
\]

It follows that $Q$ can be uniformly computed from the given data.  By construction, $Q \cap A = \{\zeta_0\}$.
\end{proof}

Finally, we turn to the problem of computing links between points on the boundary of a connected open set.   We provide an answer when the boundary is locally arc-like.  For example, when the boundary is a union of disjoint Jordan curves.

\begin{theorem}\label{thm:COMPUTING.LINK}
From a name of an open and connected $D \subseteq \C$, names of distinct points $\zeta_0, \zeta_1 \in \partial D$, arcs $B_0, B_1$, and a rational number $r > 0$ such that $D_r(\zeta_j) \cap \partial D \subseteq B_j$, it is possible to compute an arc $A$ that links $\zeta_0$ to $\zeta_1$ via $D$.
\end{theorem}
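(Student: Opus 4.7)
The plan is to decompose the required link into three pieces: an accessing arc from $\zeta_0$ to a rational interior point $z_0$, a rational polygonal arc inside $D$ from $z_0$ to a second rational interior point $z_1$, and an accessing arc from $z_1$ to $\zeta_1$, then to assemble these via Theorem \ref{thm:COMPUTE.ARC}. As preparation I will use Theorems \ref{thm:COMPUTE.PARAM} and \ref{thm:ULAC.CIK} to extract a ULAC function $g_j$ for each $B_j$, choose $k \in \N$ with $2^{-g_j(k)} + 2^{-k} \leq r$ for $j = 0, 1$, and then search the c.e.\ enumeration of rational rectangles contained in $D$ (supplied by the name of $D$) for rational points $z_0, z_1 \in D$ satisfying $|z_j - \zeta_j| < 2^{-g_j(k)}$ and $d(z_j, B_j) > 0$. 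Such $z_j$ exist because $\zeta_j \in \partial D$ and $B_j$ has empty interior, and both inequalities are semi-decidable from the given data, so the search terminates.

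With $z_0, z_1$ in hand, I will apply a translated and dilated version of Theorem \ref{thm:ACCESS} to the open disk $D_r(\zeta_j)$, the arc $B_j$, the interior point $z_j$, and the boundary point $\zeta_j$ to produce an accessing arc $Q_j$ whose intermediate points lie in $D_r(\zeta_j) - B_j$. The hypothesis that $\zeta_j$ lies on the boundary of the connected component of $z_j$ in $D_r(\zeta_j) - B_j$ follows from Theorem \ref{thm:BOUNDARY.CONNECTED}, which is exactly why $k$ was chosen with the bound above. Because $D_r(\zeta_j) \cap \partial D \subseteq B_j$, every connected component of $D_r(\zeta_j) - B_j$ lies wholly in $D$ or wholly in $\C - \overline{D}$; the component of $z_j$ meets $D$ and hence is contained in $D$, so $Q_j - \{\zeta_j\} \subseteq D$ and $Q_j \cap B_j = \{\zeta_j\}$.

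Next, I will enumerate rational polygonal arcs from $z_0$ to $z_1$ and test each for inclusion in $D$; inclusion is semi-decidable because each segment is compact and $D$ is given by an enumeration of rational rectangles contained in it, and Lemma \ref{lm:POLY.ARC} guarantees that such an arc $P$ exists. Let $S = Q_0 \cup P \cup Q_1$. Then $S$ is compact and connected, $\zeta_0, \zeta_1 \in S$, $S - \{\zeta_0, \zeta_1\} \subseteq D$, and a name of $S$ as a compact set is obtained by merging plots of its three pieces. Feeding $S$, $\zeta_0$, and $\zeta_1$ into Theorem \ref{thm:COMPUTE.ARC} yields a computable name of an arc $A \subseteq S$ from $\zeta_0$ to $\zeta_1$; every intermediate point of $A$ then lies in $S - \{\zeta_0, \zeta_1\} \subseteq D$, so $A$ links $\zeta_0$ to $\zeta_1$ via $D$.

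The main technical obstacle I foresee is supplying the CIK function for $S$ demanded by Theorem \ref{thm:COMPUTE.ARC}. Each of $Q_0$, $P$, $Q_1$ carries a computable CIK function via Theorems \ref{thm:COMPUTE.PARAM} and \ref{thm:ULAC.CIK}; away from the finitely many points where the three pieces meet or where a single piece self-intersects, the CIK function for $S$ is inherited from whichever piece is locally present, while at each such exceptional point one must paste together connected neighborhoods drawn from the various arcs passing through it into a single connected neighborhood in $S$. This gluing is routine but is the one place where the proof must go slightly beyond a direct appeal to the earlier results.
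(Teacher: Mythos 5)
Your setup tracks the paper's own proof almost exactly: you extract ULAC functions $g_j$ for the arcs $B_j$, choose $k$ with $2^{-g_j(k)}+2^{-k}\leq r$, search for rational points $z_j\in D-B_j$ near $\zeta_j$, apply Theorem \ref{thm:BOUNDARY.CONNECTED} and then Theorem \ref{thm:ACCESS} inside the disks $D_r(\zeta_j)$, and join the two accessing arcs by a rational polygonal arc $P\subseteq D$ found by search. Your remark that each component of $D_r(\zeta_j)-B_j$ lies wholly in $D$ or wholly outside $\overline{D}$ correctly supplies a point the paper leaves implicit.

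The gap is in the final assembly. You propose to feed $S=Q_0\cup P\cup Q_1$ into Theorem \ref{thm:COMPUTE.ARC}, and you rightly single out the CIK function for $S$ as the obstacle, but your claim that the pieces meet in only finitely many points and that the gluing is therefore routine is not correct. $P$ is a polygonal arc while $Q_0$ is an arbitrary arc handed to you as a black box by Theorem \ref{thm:ACCESS}; their intersection can be infinite, and, more seriously, a CIK function for the union is not in general computable from names and CIK functions of the pieces. The local-connectivity modulus of $Q_0\cup P$ at a point $p\in P$ is governed by how closely strands of $Q_0$ approach $p$ without being joined to $P$ inside a small disk about $p$, and this is not effectively detectable: one can construct two computable arcs, each with a computable CIK function, whose union is a Peano continuum admitting no computable CIK function (let $Q_0$ make a sequence of dips toward a segment of $P$, the $e$-th dip descending to within $2^{-N_e}$ of $P$ where $N_e$ is the halting time of the $e$-th machine and touching $P$ exactly when that machine never halts; any CIK function for the union must then dominate the halting times). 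So the appeal to Theorem \ref{thm:COMPUTE.ARC} fails as stated. The paper sidesteps this entirely: rather than recovering an arc from the compact set $S$, it culls one directly from the union of the two accessing arcs and $P$ by the first-meeting-point technique from the proof of Theorem \ref{thm:ACCESS}, carried out on the rational polygonal approximants, where computing ``the least $t$ such that the current piece at parameter $t$ lies on the next piece'' is genuinely effective. Replacing your last step with that culling argument would repair the proof.
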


\begin{proof}
Without loss of generality, we can assume $D_r(\zeta_0) \cap D_r(\zeta_1) = \emptyset$.  
Compute an increasing ULAC function for $B_j$, $g_j$.  Compute a number $k \in \N$ such that 
$2^{-k + 1} \leq r = d(\zeta_j, \partial D_r(\zeta_j))$.  
For each $j$, compute a rational point $\xi_j \in D - B_j$ such that $|\xi_j - \zeta_j| < 2^{-g_j(k)}$.
By Theorem \ref{thm:BOUNDARY.CONNECTED}, $\zeta_j$ is a boundary point of the connected component of $\xi_j$ in $D_r(\zeta_j) - B_j$.  Therefore, by Theorem \ref{thm:ACCESS}, it is possible to uniformly compute from the given data an arc $A_j \subseteq D_r(\zeta_j)$ from $\xi_j$ to $\zeta_j$ such that $A_j \cap B_j =\{\zeta_j\}$.  Hence, 
$A_j \cap \partial D = \{\zeta_j\}$.  Furthermore, $A_1 \cap A_2 = \emptyset$.  By Lemma \ref{lm:POLY.ARC}, we can compute a rational polygonal arc $P \subseteq D$ from $\xi_1$ to $\xi_2$ from the given data.  It may be that $P$ has one or more points in common with $B_1$ besides $\xi_1$, and it may have one or more points in common with $B_2$ besides $\xi_2$.  However, by using the techniques in the proof of Theorem \ref{thm:ACCESS}, we can cull an arc $A$ from $B_1 \cup P \cup B_2$ as required.
\end{proof}

\section*{Acknowledgement}
I thank the referee for his valuable comments.  I also thank Jack Lutz for helpful and stimulating conversations on these topics.  I also thank my wife Susan for support.

\bibliographystyle{amsplain}
\def\cprime{$'$}
\providecommand{\bysame}{\leavevmode\hbox to3em{\hrulefill}\thinspace}
\providecommand{\MR}{\relax\ifhmode\unskip\space\fi MR }
\providecommand{\MRhref}[2]{%
  \href{http://www.ams.org/mathscinet-getitem?mr=#1}{#2}
}
\providecommand{\href}[2]{#2}

\end{document}